
\documentclass[final,leqno,onefignum,onetabnum]{siamltex1213}

\usepackage{amssymb,amsmath,mathrsfs}
\input{cf_macros.sty}

\title{Nonuniform sampling and multiscale computation\thanks{ The work of Engquist is supported in part by NSF grant DMS 12 - 17203 and the Institute for Engineering and Scientific Computing (ICES) at The University of Texas at Austin}}
\author{Bj\"{o}rn Engquist\thanks{Department of Mathematics, The University of Texas at Austin, Austin, TX 78712, USA } \and Christina Frederick\footnotemark[2]}

\begin{document}
\maketitle

\begin{abstract}  In homogenization theory and multiscale modeling, typical functions satisfy the scaling law $f^{\epsilon}(x) = f(x,x/\epsilon)$, where $f$ is periodic in the second variable and  $\epsilon$ is the smallest relevant wavelength, $0<\epsilon\ll1$. Our main result is a new $L^{2}$-stability estimate for the reconstruction of such bandlimited multiscale functions $f^{\epsilon}$ from periodic nonuniform samples. The goal of this paper is to demonstrate the close relation between and sampling strategies developed in information theory and computational grids in multiscale modeling. This connection is of much interest because numerical simulations often involve discretizations by means of sampling, and meshes are routinely designed using tools from information theory. The proposed sampling sets are of optimal rate according to the minimal sampling requirements of Landau \cite{Landau}. 

\end{abstract}

\begin{keywords}Shannon's sampling theorem, nonuniform periodic sampling, multiscale functions, heterogeneous multiscale method\end{keywords}

\begin{AMS}94A20, 65M50
\end{AMS}

\pagestyle{myheadings}
\thispagestyle{plain}

\normalsize


\section{Introduction}

Multiscale modeling and computation has recently been a very active research field. A major concern is that in direct numerical simulation the smallest important scales must be resolved over the length of the largest scales in each dimension. In applications such as material science, chemistry, fluid dynamics, and biology, the problems contain a large variety of scales. It is therefore necessary to design numerical methods that efficiently capture fine scale features on the underlying computational mesh.

Shannon's sampling theorem \cite{Shannon} is often cited in the numerical analysis of linear or nonlinear systems that are discretized on a uniform grid. In order to ensure that the solution to the discretized problem can be represented, the grid must be sufficiently dense. Shannon's theorem implies that if the size of the computational domain is 1 and $0<\epsilon\ll1$ is the smallest important wavelength, then at least 2$\epsilon^{-1}$ unknowns are required in each dimension. 

As a consequence, full resolution may require a prohibitively high computational cost. Many different numerical frameworks have been proposed to handle this problem, for example the heterogeneous multiscale method (HMM) \cite{Abdulle2012,E2003}. HMM provides a framework for capturing large scale features on coarse grids with spacing $\Delta x>\epsilon$ by incorporating local simulations on grids with much finer resolution $\delta x<\epsilon$.  A key observation in this paper is that the grids used in these multiscale methods are very well matched with nonuniform sampling strategies for bandlimited functions \cite{Behmard2002, Marvasti2001}. 

Our goal is to extend the previous numerical analysis to the discretization of systems involving functions of the type studied in homogenization theory and multiscale modeling. The emphasis here is to make a connection between the two fields by viewing this class of functions from the perspective of information theory. When the frequency components of a function lie in a set of disjoint intervals that are spaced $O(\epsilon^{-1})$ apart, it is known that unique recovery is guaranteed from a clustered samples with sufficient density. We interpret the sampling density requirements in terms of the grid spacing in a macro-micro coupled mesh. The main theoretical contribution of this paper is Theorem \ref{thm:main-samp}, which provides a new $L^{2}$ stability estimate for the reconstruction of functions with structured bandlimitation from periodic nonuniform samples.

In the next section we briefly present an example of a HMM scheme for differential equations that serves as the main motivation for this work. Section \ref{sec:samplingbg} is devoted to a brief background of relevant issues in sampling theory for bandlimited functions. In \S\ref{sec:mainresult} we place multiscale functions in this context and state the main theorem. The proposed sampling strategy for this class of functions is given in \S\ref{sec:nonunifsamp}, and in \S\ref{sec:proof} we prove the $L^{2}$ stability of the reconstruction. We conclude in \S\ref{sec:conclusion}.

\section{Functions from multiscale computation}\label{sec:bgsetting}
In homogenization theory and in convergence analysis of HMM, the basic objects of study are functions that contain variations on multiple scales.  In this paper we represent the multiscale nature of a function using the superscript $\epsilon$, where $\epsilon$ is a small parameter that represents the ratio of scales in the problem. The next example demonstrates the role of multiscale functions in the numerical treatment of differential equations.

\begin{example}[HMM for highly oscillatory systems] \rm
Consider stiff ordinary differential equations (ODEs) of the form
\begin{align}
\frac{d u^{\epsilon}}{dt} &= g^{\epsilon}(u^{\epsilon}, t),\labeleq{msode}
\end{align}
where $u^{\epsilon}$ is a solution that oscillates on the time scale of $O({\epsilon})$, $0<\epsilon\ll1$. 

Assume that  as $\epsilon\rightarrow 0$, $u^{\epsilon}\rightarrow U\in C^{1}(\RR)$ and that $U$ is given by
\begin{align}
\frac{d}{dt}U = \bar{g}(U,t).\labeleq{effode}
\end{align}

This ``effective'' system can be solved using HMM \cite{Abdulle2012,Engquist2005} even if the form of $\bar{g}$ is not explicitly known. The right hand side of \refeq{effode} can be approximated using averaged solutions to the full system. Figure $\ref{fig:hmmode}$ represents an HMM-type scheme for approximating the solution $U$ of \refeq{effode}. The top directed axis represents the coarse grid that holds values of $U$. In the lower axis, local solutions to \refeq{msode} are computed using an initial condition determined by $U(t_{n})$. Then, $\bar{g}$ is evaluated by averaging the solutions with a compactly supported kernel.

\begin{figure}
\caption{{\bf HMM for ordinary differential equations.} The diagram represents the macro-micro coupling in an HMM scheme for ODEs given in \cite{Engquist2005}. Here, the solution is calculated on a local microscopic mesh in order to approximate the solution on the macroscopic grid.}\label{fig:hmmode}
\begin{center}
\begin{tikzpicture}

 \tikzstyle{ann} = [fill=white,font=\footnotesize,inner sep=1pt]
    \begin{scope}[thick,font=\scriptsize]
    \draw [<->] (-5,2) -- (2.5,2) node [above left]  {};
    \draw [<->] (-5,0) -- (2.5,0) node [above left]  {};
 	\draw [	->] (-4,1.5) -- (-4,.5) node [above left]  {};
	\draw [	->] (-2.9,.5) -- (-3.75,1.75) node [above left]  {};
   \draw [	->] (0,1.5) -- (0,.5)  node [above left]  {};
	\draw [	->] (1.1,.5) -- (.25,1.75) node [above left]  {};

    \iffalse
    \draw (1,-3pt) -- (1,3pt)   node [above] {$1$};
    \draw (-1,-3pt) -- (-1,3pt) node [above] {$-1$};
    \else
    \foreach \n in {-4,0}{%
        \draw (\n,1.7) -- (\n,2.3)   node  [above] {$ $};  
        \draw (\n,-.3) -- (\n,.3)   node [above] {$ $}; 
    
     	    \draw (\n+.1,-3pt) -- (\n+.1,3pt)   node [above] {};        
    		\draw (\n+.2,-3pt) -- (\n+.2,3pt)   node [above] {};  
    		\draw (\n+.3,-3pt) -- (\n+.3,3pt)   node [above] {};  
    		\draw (\n+.4,-3pt) -- (\n+.4,3pt)   node [above] {};  
    		\draw (\n+.5,-3pt) -- (\n+.5,3pt)   node [above] {}; 
    	    \draw (\n+.6,-3pt) -- (\n+.6,3pt)   node [above] {}; 
      		\draw (\n+.7,-3pt) -- (\n+.7,3pt)   node [above] {}; 
            \draw (\n+.8,-3pt) -- (\n+.8,3pt)   node [above] {}; 
           	\draw (\n+.9,-3pt) -- (\n+.9,3pt)   node [above] {}; 
            \draw (\n+1,-3pt) -- (\n+1,3pt)   node [above] {}; 
            \draw (\n+1.1,-3pt) -- (\n+1.1,3pt)   node [above] {}; 
            
    }
       \draw (-4,1.7) -- (-4,2.3)   node (t1) [above] {$ $};  
       \draw (0,1.7) -- (0,2.3)   node (t2) [above] {$$}; 
       \node at (-4.5, 2.5) {$T=t_{n}$};
       \node at (-2.6, -.5) {$t_{n+\eta}$};
       \node at (-4.5, -.5) {$T=t_{n}$}
       (t1) edge[pil,bend left=45] (t2); 
        \draw (-4,-.3) -- (-4,.3)   node [above] {$ $}; 
		\node at (.7, 2.5) {$T=t_{n+1}$};


    \fi
    \end{scope}
    \end{tikzpicture}
    \end{center}

\end{figure}
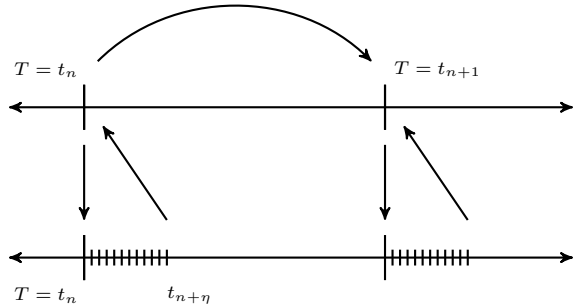

\end{example}

HMM captures the effective behavior of the system by exploiting known features of the solutions, such as scale separation or periodicity. A simple case is a model solution that varies on two distinct scales. The construction of a typical two-scale function $u^{\epsilon}$ begins with a ``slowly" varying function $u(x,y)$. A ``fast'' variable is introduced through the transformation $y\rightarrow x/\epsilon$, resulting in the representation
 \begin{align*}
u^{\epsilon}(x) = u(x, x/\epsilon) \quad \quad \text{where $ u(x,y)$ is periodic in $y$, } \quad 0 < \epsilon \ll1.
\end{align*}
The relevant functions are often of lower regularity, and in order to connect to information theory, we will approximate them by  bandlimited functions, described in the next section.

\section{Classical sampling theory for bandlimited functions}\label{sec:samplingbg}

We begin by defining the class of bandlimited functions.

\begin{definition}\rm
Let $\mc{F}\subset \RR^d$ be a bounded, measurable set. The space of $\mc{F}-$bandlimited functions is defined by \[\bl{\mc{F}}:=\{g\in L^{2}(\RR^d) \mid \hat{g}(\xi) = 0 \text{ for all } \xi \not \in \mc{F}\},\]
where the Fourier transform is given by $\hat{g}(\xi)=\int_{\RR^d} g(x)e^{2\pi i x \xi}dx$.
 \end{definition}
 
 The celebrated sampling theorem that Shannon used in his theory of communication \cite{Shannon1948, Shannon} provides a characterization of one-dimensional bandlimited signals. We shall now state the classical sampling theorem adapted to our mathematical framework.

\begin{theorem}[Classical sampling theorem]\label{thm:cst} Let $W>0$ and choose $\Delta x$ to be a fixed constant that satisfies $0<\Delta x\leq 1/2W $. Then,
\begin{align}
\forall f\in \bl{[-W, W]}, \quad f(x)&=\sum_{j=-\infty}^{\infty}f\(j\Delta x\)\frac{\sin\pi (x  - j\Delta x)}{\pi (x - n\Delta x)} \labeleq{rec},
\end{align}
where the convergence of the sum is in the $L^{2}-$norm and uniformly on $\RR$. 
\end{theorem}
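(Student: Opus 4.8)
The plan is to pass to the Fourier side, periodize the spectrum, and recognize the sample values $f(j\Delta x)$ as the Fourier coefficients of that periodization. Since $f\in\bl{[-W,W]}$, the transform $\hat f$ is supported in $[-W,W]$ and lies in $L^{2}$; because this support is bounded, Cauchy--Schwarz gives $\hat f\in L^{1}(\RR)$ as well, so $f$ has a continuous representative (which we use throughout) and the inversion formula $f(x)=\int_{\RR}\hat f(\xi)\,e^{-2\pi i x\xi}\,d\xi$ holds at every $x$. In particular the samples $f(j\Delta x)$ are unambiguously defined.

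First I would set $I:=[-1/(2\Delta x),\,1/(2\Delta x)]$, an interval of length $1/\Delta x$. The hypothesis $0<\Delta x\le 1/(2W)$ says precisely that $[-W,W]\subseteq I$, hence $\hat f$ is supported in $I$; this is the only place the bound on $\Delta x$ is used, and it is what rules out spectral folding below. Viewing $\hat f|_{I}$ as an element of $L^{2}(I)$, I would expand it in the orthonormal basis $\{\sqrt{\Delta x}\,e^{2\pi i j\Delta x\,\xi}\}_{j\in\ZZ}$ of $L^{2}(I)$, obtaining $\hat f(\xi)=\sum_{j\in\ZZ}c_{j}\,e^{2\pi i j\Delta x\,\xi}$ on $I$ with convergence in $L^{2}(I)$. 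The crux is the identification
\[
c_{j}=\Delta x\int_{I}\hat f(\xi)\,e^{-2\pi i j\Delta x\,\xi}\,d\xi=\Delta x\int_{\RR}\hat f(\xi)\,e^{-2\pi i j\Delta x\,\xi}\,d\xi=\Delta x\,f(j\Delta x),
\]
where the middle equality uses that $\hat f$ vanishes off $I$ and the last is the pointwise inversion formula evaluated at $x=j\Delta x$.

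Next I would apply the inverse Fourier transform termwise. For $N\in\NN$ let $P_{N}\hat f$ denote the symmetric partial sum $\sum_{|j|\le N}c_{j}e^{2\pi i j\Delta x\,\xi}$ (a function supported in $I$) and put $g_{N}(x):=\int_{I}(P_{N}\hat f)(\xi)\,e^{-2\pi i x\xi}\,d\xi$. Because this is a finite sum, the integral and sum interchange, and an elementary computation of $\int_{-1/(2\Delta x)}^{1/(2\Delta x)}e^{-2\pi i(x-j\Delta x)\xi}\,d\xi$ gives
\[
g_{N}(x)=\sum_{|j|\le N}f(j\Delta x)\,\frac{\sin\!\big(\pi(x-j\Delta x)/\Delta x\big)}{\pi(x-j\Delta x)/\Delta x},
\]
i.e.\ the truncated reconstruction series of \refeq{rec} (with the kernel normalized to equal $1$ at the node $x=j\Delta x$). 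It remains to show $g_{N}\to f$ in the two required modes. By Plancherel, $\|f-g_{N}\|_{L^{2}(\RR)}=\|\hat f-P_{N}\hat f\|_{L^{2}(I)}\to0$, since the Fourier series of $\hat f|_{I}$ converges in $L^{2}(I)$ (here both $\hat f$ and $P_{N}\hat f$ are supported in $I$). For uniform convergence, Cauchy--Schwarz on the bounded interval $I$ gives, uniformly in $x\in\RR$,
\[
|f(x)-g_{N}(x)|=\Big|\int_{I}\big(\hat f-P_{N}\hat f\big)(\xi)\,e^{-2\pi i x\xi}\,d\xi\Big|\le|I|^{1/2}\,\|\hat f-P_{N}\hat f\|_{L^{2}(I)}\longrightarrow0 .
\]
Combining the last three displays yields \refeq{rec} with $L^{2}$ and uniform convergence on $\RR$.

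I do not expect a genuine obstacle, since this is the classical Whittaker--Shannon theorem; the only points needing care are organizational. One is the mild regularity remark ($\hat f\in L^{1}$, hence $f$ continuous) that legitimizes pointwise samples and the pointwise inversion formula. The other is the logical role of $\Delta x\le 1/(2W)$: it is exactly the condition ensuring that periodizing $\hat f$ with period $1/\Delta x$ returns $\hat f$ on $I$ rather than its aliased version, so that $c_{j}=\Delta x\,f(j\Delta x)$; with a coarser spacing the same computation would instead produce the folded spectrum and the reconstruction would fail.
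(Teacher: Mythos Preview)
The paper does not supply its own proof of Theorem~\ref{thm:cst}: the classical sampling theorem is quoted as Shannon's result and stated without argument, so there is no proof in the paper to compare your attempt against. Your proposal is the standard and correct derivation---expand $\hat f$ as a Fourier series on the fundamental interval $I=[-1/(2\Delta x),1/(2\Delta x)]$, identify the Fourier coefficients with the samples via Fourier inversion, and then invert termwise, getting $L^{2}$ convergence from Plancherel and uniform convergence from Cauchy--Schwarz on the bounded interval $I$. The only small remark is that the kernel you (correctly) obtain, $\sin\!\big(\pi(x-j\Delta x)/\Delta x\big)\big/\big(\pi(x-j\Delta x)/\Delta x\big)$, does not literally match the kernel printed in \refeq{rec}; the paper's displayed formula carries a normalization/typographical slip (note also the stray index $n$ in the denominator), and your version is the right one.
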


We refer to the set $X=\{x_j\}$ as a sampling set, and the function values $f(y), y\in X$ as samples. When the adjacent points in $X$ are equidistant, $x_{j+1}-x_j = \Delta x$ for all $j$, then $X$ is called a uniform sampling set with sampling rate $\frac{1}{\Delta x}$. Theorem \ref{thm:cst} guarantees the recovery of a bandlimited function from its uniform samples provided that the sampling rate is greater than or equal to twice the highest frequency. The sampling rate $\frac{1}{\Delta x} = 2W$ is known as the \emph{Nyquist rate}.

Uniform sampling theory has been developed in more general contexts, including sampling of bandpass signals $f\in \bl{[-W+W_{0}, W+W_{0}]}$ \cite{Kohlenberg1953}, the $d-$dimensional uniform sampling theorem of Middleton and Peterson \cite{Peterson1962}, and the sampling theorem for locally compact abelian groups \cite{Kluvanek1965}.

In this work, we are interested in sampling signals that arise in important applications, including image processing, geophysics, and optical tomography. Due to the nature of the acquisition of measurements, it is not always possible to sample a function uniformly. Therefore, it is important to study the theory of \emph{nonuniform} or \emph{irregular} sampling.

Irregular sampling involves the reconstruction of a bandlimited function from nonuniformly spaced samples. For a review of the nonuniform sampling literature, see \cite{Marvasti2001}, and for theoretical and numerical aspects, see \cite{Feichtinger1992, Grochenig1992}. Of interest is \emph{periodic nonuniform sampling}, in which the sampling sets have the form $X=\cup_k X_k$, 
\begin{align}
X_{k}&:= \{j \Delta x + k\delta x \mid {j\in\ZZ^{d}} \}, \quad k\in \ZZ^d. \labeleq{samplingset}
\end{align}
This type of sampling is well studied in the signal processing literature \cite{Margolis2008, Prendergast, Vaidyanathan1998, Venkataramani2003, Yen1956}. One major challenge in the design of nonuniform sampling strategies from a practical point of view is the stability of the reconstruction \cite{Marks1985}.

\begin{definition}\rm Let $\mc{F}\subset \RR$ be a bounded, measurable set. For a given sampling strategy, a sampling set $X =\{x_{j}\}$ is a \textit{set of stable sampling}
for $\mathcal{B}(\mathcal{F})$ if there exists a constant $C>0$ such that
\begin{align*}
\int_{-\infty}^{\infty}|g(x)|^2dx \le C\sum_{j} |g(x_j)|^2 \text{ for all }  g\in
\mathcal{B}(\mathcal{F}).
\end{align*}
\end{definition}

Uniqueness and stability results for the nonuniform sampling of bandlimited functions are provided by Beurling and Landau  \cite{Beurling1989, Landau, Landau1967}. In these works, a bound is obtained on the minimum sampling density required for the stable reconstruction of a bandlimited function. 
\begin{definition}\rm
For a sampling set $X \subset \RR$,  the \emph{lower Beurling density} is determined by the sum of the bandwidths of a function, 
\begin{align*}
D^{-}(X) = \lim_{r\rightarrow \infty} \inf_{y\in \RR} \frac{|X\cap [y, y+r]|}{r},
\end{align*}
where the numerator is set to be the number of points in $X$ in each interval of length $r$.
\end{definition} 

Landau proved that if $X$ is a sampling set for a class of functions $f\in \bl{\mc{T}}$, $\mc{T}\subset \RR$, then  $D^{-}(X)\geq \lambda(\mc{T})$, where $\lambda$ is the Lebesgue measure. For certain multiband functions with large spectral gaps, this lower bound indicates the possibility of stable sampling at a rate that is much lower than the Nyquist rate.

There is an enormous volume of literature on sampling theory and its various generalizations and extensions \cite{Jerri1977, Papoulis1977a, Vaidyanathan2001}.  There are very similar nonuniform sampling theorems using bunched samples to reconstruct functions that have spectral gaps \cite{Behmard2002, Vaidyanathan1998, Venkataramani2000}. The result presented here differs from these results in terms of conditions on the spectral gaps or the notion of stability.


\section{Main result}\label{sec:mainresult}

The main theorem describes a sampling result for a class of multiscale functions with structured bandlimitation. Specifically, we assume that $f^{\epsilon}$ and $f$ are square-integrable functions satisfying
\begin{align}
f^{\epsilon}(x) &= f(x, x/\epsilon) \quad \quad \text{where $ f(x,y)$ is $Y$-periodic in $y$, }\quad 0 < \epsilon \ll1, \labeleq{twoscale}\\
\hat{f}(\xi) &= 0 \text{ for all } \xi \not \in  [-N, N]^d\times [-M, M]^{d};\quad\text{ $0<2N<\frac{1}{\epsilon}$, $M\geq 1$}, \labeleq{bl}  \end{align}
where $Y=[0,1]^{d}$. We will later show that $f^{\epsilon}$ contains spectral gaps with magnitude proportional to $\epsilon^{-1}$ and the Fourier transform satisfies
\begin{align}
\hat{f^{\epsilon}}(\xi) = 0 \text{ for all } \xi \not \in \bigcup_{|m|=0}^{M}\( [-N, N]^d+\frac{m}{\epsilon}\);\quad\text{ $0<2N<\frac{1}{\epsilon}$, $M\geq1$}. \labeleq{fepsspec}
\end{align} 
This specific structure will allow us to develop an optimal sampling strategy (see Figure \ref{fig:mb} for an example in one dimension). We will now state the main nonuniform sampling result for multiscale functions in the case $d=1$. The proof will be given in later sections. 

\begin{figure}[h!]
 \caption{{\bf Multiband spectrum and nonuniform sampling set ($d=1$).} The diagrams below represent the spectral support of a one-dimensional function satisfying \refeq{twoscale} and \refeq{bl} (top) and a periodic nonuniform sampling set with macroscale spacing $\Delta x$ and microscale spacing $\delta x$ (bottom). }\label{fig:mb}
\begin{center}
\begin{tikzpicture}
 \tikzstyle{ann} = [fill=white,font=\footnotesize,inner sep=1pt]
    \begin{scope}[thick,font=\scriptsize]
    \draw [<->] (-5,0) -- (5,0) node [above left]  {$\xi$};
    \draw [<->] (0,0) -- (0,1.5) node [above left]  {};

    \iffalse
    \draw (1,-3pt) -- (1,3pt)   node [above] {$1$};
    \draw (-1,-3pt) -- (-1,3pt) node [above] {$-1$};
    \else
    \draw[thick,fill=blue!20] (-.5,0) rectangle (.5,.4) node {$ $};
 
    \foreach \n in {-4,-2,0,2, 4}{%
       \draw[thick,fill=blue!20] (\n-.5,0) rectangle (\n+.5,.4) node [below] {$$};
       
    }
   \node at (-3.0, .2) {${\bf \fontsize{5cm}{1em}\hdots}$};
   \node at (3.0, .2) {${\bf \fontsize{5cm}{1em}\hdots}$};
     \node at (0, -.5) {$[-N, N]    $};
	\node at (-4, -.5) {$[-N- \frac{M}{\epsilon}, N- \frac{M}{\epsilon}]$};
	\aa\node at (4, -.5) {$[-N+\frac{M}{\epsilon}, N+\frac{M}{\epsilon}] $};
	
    \fi
    \end{scope}
    \end{tikzpicture}
\\
\vspace{1cm}

\begin{tikzpicture}
 \tikzstyle{ann} = [fill=white,font=\footnotesize,inner sep=1pt]
    \begin{scope}[thick,font=\scriptsize]
    \draw [<->] (-5,0) -- (5,0) node [above left]  {};

    \iffalse
    \draw (1,-3pt) -- (1,3pt)   node [above] {$1$};
    \draw (-1,-3pt) -- (-1,3pt) node [above] {$-1$};
    \else
    \foreach \n in {-4.5,-2.5,-.5,1.5, 3.5}{%
        \draw (\n,-3pt) -- (\n,3pt)   node [above] {$ $};  
        \draw (\n+.3,-3pt) -- (\n+.3,3pt)   node [above] {}; 
        \draw (\n+.6,-3pt) -- (\n+.6,3pt)   node [above] {}; 
        \draw (\n+.9,-3pt) -- (\n+.9,3pt)   node [above] {};
       
    }
    \draw[decorate,decoration={brace}, thick](-4.5,10pt)--(-2.5,10pt);
  	\node[ann] at (-3.5,20pt) {$\Delta x$};
	\draw[decorate,decoration={brace,mirror}, thick](-4.5,-10pt)--(-4.2,-10pt);
  	\node[ann] at (-4.35,-20pt) {$\delta x$};
	
    \fi
    \end{scope}
    \end{tikzpicture}

\end{center}
\end{figure}
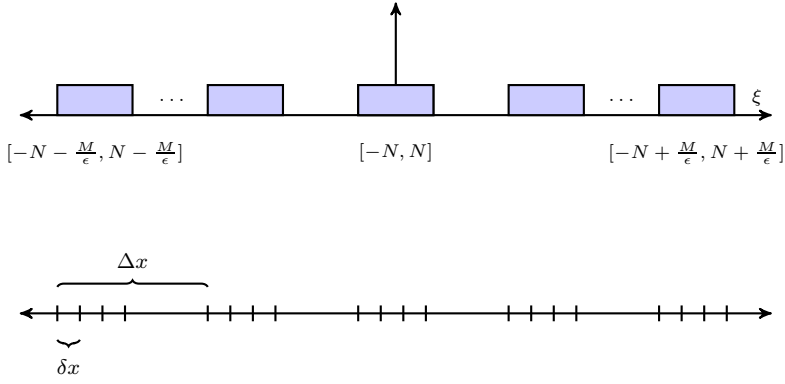

\begin{theorem}\label{thm:main-samp}
Let $f^{\epsilon}\in L^{2}(\RR)$ $(0<\epsilon\ll 1)$ be a function with Fourier transform $\hat{f^{\epsilon}}$ satisfying \refeq{fepsspec}. Define the nonuniform sampling set $X:=\cup_{k} X_{k}$, 
\begin{align}
X_{k}&:= \{j \Delta x + k\delta x \mid {j\in\ZZ} \}, \quad k=0,1,  \hdots P, \labeleq{samplingset1d}
\end{align} 
where the microscale spacing $\delta x$ and the macroscale spacing $\Delta x$ satisfy \[0<\delta x \leq \epsilon/(2M+1), \qquad \epsilon<{\Delta x} \leq{1}/{2N}.\]
If $P=2M$, then the function $f^\epsilon$ can be uniquely reconstructed from the samples $f^{\epsilon}(y)$, $y\in X$ and the following stability estimate holds:
\begin{align}
\|f^{\epsilon}\|_{L^{2}(\RR)}^{2} \leq C\sum_{y\in X}|f^{\epsilon}(y)|^{2},
\end{align}
 for a positive constant $C = C(\delta x / \epsilon)$,
 \begin{align}
 C(\delta x /\epsilon) =   \frac{1}{2N}{\( \sin{(\pi ({\epsilon^{-1}} - \Delta x^{-1})\delta x) }\)^{-2M}}.
\end{align}
\end{theorem}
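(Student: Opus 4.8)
The plan is to reduce the reconstruction of $f^{\epsilon}$ to a family of finite linear systems indexed by a frequency variable $\omega$, and to control their conditioning uniformly in $\omega$. \emph{First, a structured decomposition.} Since $f(x,y)$ is $Y$-periodic in $y$ and $\hat f$ is supported in $[-N,N]\times[-M,M]$, the Fourier series of $f$ in $y$ terminates at $|m|\le M$ and its coefficients are $[-N,N]$-bandlimited in $x$, so
\begin{align*}
f^{\epsilon}(x)=\sum_{|m|\le M}c_{m}(x)\,e^{-2\pi i m x/\epsilon},\qquad c_{m}\in\bl{[-N,N]},\quad \hat c_{m}=\hat{f^{\epsilon}}(\cdot+m/\epsilon)\big|_{[-N,N]}.
\end{align*}
This already gives \refeq{fepsspec}; and since $2N<1/\epsilon$ the translated supports $[-N,N]+m/\epsilon$ are pairwise disjoint, so Plancherel yields the identity $\|f^{\epsilon}\|_{L^{2}(\RR)}^{2}=\sum_{|m|\le M}\|c_{m}\|_{L^{2}(\RR)}^{2}$, which I will use for the left-hand side.

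\emph{Next, the coset transforms.} Each sequence $s_{k}[j]:=f^{\epsilon}(j\Delta x+k\delta x)$ lies in $\ell^{2}(\ZZ)$ (Plancherel--P\'olya), and Poisson summation expresses its discrete Fourier transform $\widehat{s_{k}}$ (a $1/\Delta x$-periodic function) as a \emph{finite} sum over the bands aliasing onto a given $\omega$:
\begin{align*}
\widehat{s_{k}}(\omega)=\frac{1}{\Delta x}\sum_{m\,:\,\omega\in J_{m}}z_{m}(\omega)^{k}\,\hat c_{m}(\nu_{m}(\omega)),\qquad k=0,\dots,2M,
\end{align*}
where $J_{m}$ is the image of $[-N,N]+m/\epsilon$ under sampling with spacing $\Delta x$, $\nu_{m}(\omega)\in[-N,N]$ is the de-aliased frequency, and $z_{m}(\omega)=e^{-2\pi i\delta x(\nu_{m}(\omega)+m/\epsilon)}=e^{-2\pi i\omega\delta x}\,e^{-2\pi i p_{m}(\omega)\delta x/\Delta x}$ with $p_{m}(\omega)\in\ZZ$. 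The bound $\Delta x\le 1/2N$ forces each band to alias coherently (no band hits a given $\omega$ twice), so this is a matrix identity $\widehat{\mathbf s}(\omega)=\tfrac1{\Delta x}V(\omega)\,\mathbf u(\omega)$, with $V(\omega)$ a (partial) Vandermonde matrix of $2M+1$ rows in the nodes $z_{m}(\omega)$ and $\mathbf u(\omega)$ the vector of unknown band values $\hat c_{m}(\nu_{m}(\omega))$.

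\emph{The crux is the separation of the aliased carriers.} Using $p_{m+1}(\omega)-p_{m}(\omega)\in\{\lfloor\Delta x/\epsilon\rfloor,\lceil\Delta x/\epsilon\rceil\}$ together with $\delta x\le\epsilon/(2M+1)$, I would show that the $2M+1$ nodes lie on a circular arc shorter than one full turn, with consecutive angular separation exceeding $(\epsilon^{-1}-\Delta x^{-1})\delta x$ turns, whence, uniformly in $\omega$,
\begin{align*}
\min_{m\ne m'}|z_{m}(\omega)-z_{m'}(\omega)|\ \ge\ 2\sin\!\big(\pi(\epsilon^{-1}-\Delta x^{-1})\delta x\big).
\end{align*}
In particular $V(\omega)$ has full column rank, which already gives uniqueness; and inverting $V(\omega)$ --- via the Lagrange-interpolation form of the inverse, and exploiting that the $p_{m}(\omega)$ sit in near-arithmetic progression --- produces $\|\mathbf u(\omega)\|^{2}\le(\Delta x)^{2}\,\kappa\,\|\widehat{\mathbf s}(\omega)\|^{2}$ with $\kappa=\kappa(\delta x/\epsilon)$ a negative power of $\sin(\pi(\epsilon^{-1}-\Delta x^{-1})\delta x)$.

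\emph{Finally, assembly.} Integrating this pointwise bound over one period in $\omega$: on the left, the change of variables $\omega\mapsto\nu_{m}(\omega)$ on each $J_{m}$ (measure-preserving onto $[-N,N]$) returns $\sum_{m}\int_{-N}^{N}|\hat c_{m}|^{2}=\|f^{\epsilon}\|_{L^{2}(\RR)}^{2}$; on the right, Parseval for the discrete transform converts $\int|\widehat{s_{k}}|^{2}$ into $\tfrac1{\Delta x}\sum_{j}|f^{\epsilon}(j\Delta x+k\delta x)|^{2}$, giving $\|f^{\epsilon}\|_{L^{2}(\RR)}^{2}\le\Delta x\,\kappa\sum_{y\in X}|f^{\epsilon}(y)|^{2}$; then $\Delta x\le 1/2N$ yields $C(\delta x/\epsilon)=\tfrac{1}{2N}\big(\sin(\pi(\epsilon^{-1}-\Delta x^{-1})\delta x)\big)^{-2M}$. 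The hardest point is the inversion step: the separation estimate $\ge 2\sin(\pi(\epsilon^{-1}-\Delta x^{-1})\delta x)$ is what ties the grid parameters to the stability constant, but extracting from it exactly the factor in $C(\delta x/\epsilon)$ requires the specific structure of the aliased carriers --- a crude Vandermonde condition-number bound overshoots the exponent --- so the careful choice of inverse, or of which cosets to use at each $\omega$, is where the real work lies.
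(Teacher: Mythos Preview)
Your approach is essentially the paper's, only in the Fourier-dual packaging: where you take the discrete-time Fourier transform of each coset sequence $s_k$ and obtain a Vandermonde system at each frequency $\omega$, the paper applies the sinc-interpolation operator $S_{X_k}$ to $f^\epsilon$ and obtains the same Vandermonde system pointwise in $x$. The reduction to a $(2M+1)\times(2M+1)$ Vandermonde matrix with nodes on the unit circle, followed by a node-separation estimate, is identical.

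Two observations from the paper would remove the uncertainty you flag at the end. First, the nodes do \emph{not} actually depend on $\omega$ (equivalently, on $x$): write $m/\epsilon=L_m/\Delta x+\alpha_m$ with $L_m\in\ZZ$ and $\alpha_m\in[0,1/\Delta x)$, and absorb the phase $e^{2\pi i\alpha_m x}$ into the unknown $c_m^\alpha(x):=c_m(x)e^{2\pi i\alpha_m x}$. After this, the Vandermonde nodes are simply $w_m=e^{2\pi i L_m\delta x/\Delta x}$, fixed once and for all --- so there is a single matrix to invert, not a family $V(\omega)$, and your ``careful choice of which cosets to use at each $\omega$'' evaporates. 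Second, the constant comes directly from Gautschi's classical bound
\[
\|V^{-1}\|_\infty\ \le\ \max_l\prod_{l'\ne l}\frac{1+|w_{l'}|}{|w_l-w_{l'}|}\ =\ \max_l\prod_{l'\ne l}\frac{2}{|w_l-w_{l'}|},
\]
since $|w_{l'}|=1$. With your own separation estimate $|w_l-w_{l'}|\ge 2\sin(\pi(\epsilon^{-1}-\Delta x^{-1})\delta x)$, each of the $2M$ factors is at most $1/\sin(\pi(\epsilon^{-1}-\Delta x^{-1})\delta x)$, giving exactly $\sin^{-2M}$ --- the ``crude'' bound does not overshoot the exponent. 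Combining with $\|S_{X_k}f^\epsilon\|_{L^2}^2=\Delta x\sum_{y\in X_k}|f^\epsilon(y)|^2$ and $\Delta x\le 1/(2N)$ yields the stated $C(\delta x/\epsilon)$.
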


If we set $\Delta x = \frac{1}{2N}$, then  $D^{-}(X)= \frac{2M+1}{\Delta x} = (2M+1)2N$. The computed lower Beurling density of of the set $X$ is therefore equal to the minimal average sampling rate for functions in the space $ \bl{ \displaystyle \cup_{m=-M}^{M} [-N+m/\epsilon, N+m/\epsilon]}$.

\begin{remark}\rm
 Multiscale problems can be divided into different groups based on common features of the problems, \cite{E2011}. 
The sampling analysis in this paper relates to, so called, type B problems. For this group microscale information is required throughout the entire computational domain.  Localized microscale simulations, spread all over the domain, supply missing information to the more efficient macroscale solver. Type A problems require microscale resolution with a microscale solver only in a fixed number of local domains. This could be in order to resolve isolated defects such as dislocations, cracks, and viscous shock profiles. Outside of these local domains, the macroscale solver is used. 

There are also links to information theory for type A problems involving functions of the following form
\begin{align}
f^{\epsilon}(t) =f(t, t/\epsilon), \qquad \underset{{|y|\rightarrow \infty}}{\lim}{f(t,y)} =
\begin{cases}
 f_{+}, &  t,y>0\\
 f_{-}, &  t,y<0,\\
 \end{cases}\labeleq{timewarped}.
\end{align}
This is essentially a smooth function $f^{-}$ that has a fast transition at zero and then is smooth again as $f^{+}$. Sampling theory for this type of warped signal is analyzed in \cite{Clark1985,Kempf2010}. A basic step in this analysis is determining an invertible transformation $\gamma$ such that $h^{\epsilon}(t):=f^{\epsilon}(\gamma(t))$ is a bandlimited function. The related sampling set $\{t_{j}\}$ for $h^{\epsilon}$ can be mapped back to the set $\{\gamma^{-1}(t_{j})\}$ that clusters points densely transition area.
\end{remark}

\subsection{Multiband structure of $f^{\epsilon}$}\label{sec:ms}
In this section we derive the spectral property \refeq{fepsspec} for functions $f^\epsilon\in L^2(\RR^d)$ that satisfy scale separation \refeq{twoscale} and bandlimitation \refeq{bl}. We begin with an example from \cite{eng:sampling} of basic multiscale functions in one dimension that satisfy scale separation and periodicity.

\begin{example}[$d=1$]\label{ex:bl-periodic}
Let $f(x,y)\in L^2(\RR\times \RR)$ be function that is bandlimited and 1-periodic in \emph{both} variables. Then, $f(x,y)$ is represented by a finite Fourier series,
\begin{align}
f(x,y)=\sum_{n=0}^{N}\sum_{m=0}^M f_{n,m}e^{2\pi i(nx+my)}.\labeleq{ex:bl-periodic}
\end{align}
The multiscale function $f^{\epsilon}(x) = f(x,x/\epsilon)$ has Fourier transform
\begin{align*}
\hat{f^{\epsilon}}(k)=\sum_{n=0}^{N}\sum_{m=0}^M f_{n,m}\delta(k - n - {m}{\epsilon^{-1}}),
\end{align*}
where $\delta(x)$ is the Dirac delta function. Then, the support of the Fourier transform can be computed,
\begin{align*}
 \hat{f}^{\epsilon}(k) = 0, \quad k \not= n+ {m}{\epsilon^{-1}}, \qquad 0\leq n \leq N, 0\leq m\leq  M.
\end{align*}

\end{example}

Now, removing the assumption that $f(x,y)$ is periodic in $x$, tools from Fourier analysis can be applied to construct multiscale functions  from locally periodic functions of two variables. The next lemma provides the specific structure of the spectrum of bandlimited multiscale functions.

\begin{lemma}\label{lemma:feps-spec} Let $f^{\epsilon}$ and $f$ be square-integrable functions satisfying \refeq{twoscale} and \refeq{bl}. The Fourier transform $\hat{f^\epsilon}$ satisfies\begin{align}
\hat{f}^{\epsilon}(\xi)=0, \quad \xi \not \in \bigcup_{|m|=0}^M \left[-N+ \frac{m}{\epsilon}, N+ \frac{m}{\epsilon}\right]^d.\labeleq{lemfepsspec}
\end{align}

\end{lemma}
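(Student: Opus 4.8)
The plan is to use the $y$-periodicity of $f$ to turn \refeq{bl} into an explicit \emph{finite} expansion of $f$ into $x$-bandlimited modes, and then to substitute $y=x/\epsilon$ and track how the spectrum is displaced by the resulting modulations.

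\textbf{Step 1 (Fourier series in the periodic variable).} Since $f(x,\cdot)$ is $Y$-periodic with $Y=[0,1]^d$, expand
\[
 f(x,y)=\sum_{m\in\ZZ^d}c_m(x)\,e^{2\pi i m\cdot y},\qquad c_m(x)=\int_Y f(x,y)\,e^{-2\pi i m\cdot y}\,dy,
\]
with convergence in $L^2(\RR^d\times Y)$. In the frequency variable dual to $y$, the joint transform $\hat f$ lives on the integer lattice, and its coefficient at a lattice point is (up to the relabelling $m\mapsto -m$) the $x$-Fourier transform of $c_m$; the bandlimitation \refeq{bl} therefore says exactly that $c_m\equiv 0$ whenever $m\notin[-M,M]^d$ and that $c_m\in\bl{[-N,N]^d}$ for each of the finitely many remaining $m$. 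Consequently
\[
 f(x,y)=\sum_{|m|\le M}c_m(x)\,e^{2\pi i m\cdot y}
\]
is a finite sum; each $c_m$, being bandlimited, has a continuous representative, so this expression is jointly continuous, $f^\epsilon(x)=f(x,x/\epsilon)$ is unambiguously defined, and $|f^\epsilon|\le\sum_{|m|\le M}|c_m|\in L^2(\RR^d)$, consistent with the standing hypothesis $f^\epsilon\in L^2(\RR^d)$.

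\textbf{Step 2 (substitute and transform).} Setting $y=x/\epsilon$ gives the finite sum $f^\epsilon(x)=\sum_{|m|\le M}c_m(x)\,e^{2\pi i m\cdot x/\epsilon}$. With the convention $\hat g(\xi)=\int_{\RR^d}g(x)e^{2\pi i x\xi}\,dx$ of the paper, multiplication of $g$ by $e^{2\pi i a\cdot x}$ shifts $\hat g$ to $\hat g(\,\cdot+a)$, so term by term with $a=m/\epsilon$,
\[
 \hat f^\epsilon(\xi)=\sum_{|m|\le M}\hat c_m\!\left(\xi+\tfrac{m}{\epsilon}\right).
\]
Since $\operatorname{supp}\hat c_m\subseteq[-N,N]^d$, the $m$-th term is supported in $[-N,N]^d-m/\epsilon$; because $m$ and $-m$ run over the same index set, the union of these supports equals $\bigcup_{|m|\le M}\big([-N,N]^d+m/\epsilon\big)=\bigcup_{|m|=0}^{M}[-N+m/\epsilon,\,N+m/\epsilon]^d$, which is \refeq{lemfepsspec}. (The hypothesis $2N<1/\epsilon$ plays no role here; it is what will later make these bands pairwise disjoint.)

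\textbf{Main obstacle.} The one delicate point is Step 1: rigorously transferring the joint bandlimitation \refeq{bl} to the coefficients $c_m$ and showing that all $c_m$ with $|m|>M$ vanish. Because $f$ is periodic — hence not integrable — in $y$, its transform in that variable is a Dirac comb, so the argument is cleanest phrased at the level of tempered distributions; alternatively one may simply adopt the finite representation $f(x,y)=\sum_{|m|\le M}c_m(x)e^{2\pi i m\cdot y}$ with $c_m\in\bl{[-N,N]^d}$ as the operational meaning of \refeq{twoscale}--\refeq{bl} (cf.\ Example \ref{ex:bl-periodic}, where in addition each $c_m$ is a trigonometric polynomial). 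Everything after Step 1 is the elementary modulation--translation identity for the Fourier transform together with bookkeeping of supports.
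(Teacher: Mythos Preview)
Your proof is correct and follows essentially the same route as the paper: Fourier-expand $f$ in the periodic variable, read off from \refeq{bl} that only finitely many coefficients $c_m\in\bl{[-N,N]^d}$ survive, substitute $y=x/\epsilon$, and apply the modulation--translation identity. Your extra care about the distributional meaning of \refeq{bl} in the periodic variable and about the sign bookkeeping (the paper writes $\hat c_m(\xi-m/\epsilon)$ where its stated transform convention actually gives $\hat c_m(\xi+m/\epsilon)$, harmless by the symmetry $m\leftrightarrow -m$ of the index set that you note) goes slightly beyond what the paper makes explicit.
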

\begin{proof}
For every fixed $\x \in \RR^{d}$, the Fourier series expansion of the periodic function $f(x,\cdot)$ has the form
\begin{align}
f\left(\x, \y\right) =  \sum_{|m|=0}^M c_{m}\left(\x\right) e^{2\pi i\ip{m}{y}}; \quad c_{m}\left(\x\right)   =  \int_{Y} f\left(\x,\y\right)e^{-2\pi i\ip{m}{y}}dy.\labeleq{fs}
\end{align}
\noindent The sum is finite because $f$ is bandlimited in the second variable. It is readily seen that the Fourier coefficients $c_{m}(x)$ are $[-N, N]^{d}-$bandlimited functions,
 \begin{align*}
\hat{c}_{m}\left({\bf \xi}\right) 
 &=  \int_{Y}\int_{\RR^{d}} f\left(\x,\y\right)e^{-2\pi i \left(\ip{m}{y}+ \ip{\xi}{x}\right)}dxdy=  \hat{f}\left(\xi, m\right)= 0 \text{ for all } \xi \not \in [-N, N]^d.
\end{align*}

\noindent Substituting  $\left(\x,\frac{\x}{\epsilon}\right)$ for $\left(\x,\y\right)$  in \refeq{fs} results in the representation 
\begin{align}
f^{\epsilon}\left(\x\right)  &=  \sum_{|m|=0}^M c_{m}\left(\x\right)e^{2\pi i \ip{m}{x/\epsilon}}; \qquad c_{m}\in \bl{[-N, N]^d}.\labeleq{msrep}
\end{align}
For $\xi \in \RR^{d}$, the Fourier transform $\hat{f}(\xi)$ can be expressed as 
$
\hat{f^{\epsilon}}\left(\xi\right) = \sum\limits_{|m|=0}^M \hat{c}_{m}\left(\xi -\frac{m}{\epsilon}\right),
$ and since $\hat{c}_{m}(\xi-\frac{m}{\epsilon}) = 0$ for $\xi \not \in[-N+ \frac{m}{\epsilon}, N+ \frac{m}{\epsilon}]^d$, the result follows.
\end{proof}

 The terms with nonzero multi-indices in the union $\refeq{lemfepsspec}$ are created by shifting the set $[-N, N]^{d}$ by an integer multiple of $\frac{1}{\epsilon}$ in at least one dimension, and since it is assumed that $\frac{1}{\epsilon}-2N>0$, the shift results in a positive distance between the sets. This nonoverlapping property will allow for the design of nonuniform sampling schemes as in \cite{eng:sampling} for a broader class of functions. 

\section{Nonuniform sampling strategy}\label{sec:nonunifsamp}

In this section we derive sufficient conditions on $\Delta x$ and $\delta x$ in order to ensure that a finite collection of sampling sets $X_{k}$,  defined by \refeq{samplingset}, can guarantee the recovery of functions $f^{\epsilon}$ with multiband structure \refeq{fepsspec}. A simple case is the class of periodic functions defined in Example \ref{ex:bl-periodic}.  

\begin{example}\rm
Let $f^\epsilon(x)=f(x,x/\epsilon)$ where $f(x,y)\in L^2(\RR\times \RR)$ is a bandlimited function that is periodic in both variables, and assume, for simplicity, that $\epsilon=1/L_1$ for a positive integer $L_1$. According to Theorem \ref{thm:cst},  the stable reconstruction of $f^\epsilon$ from uniform samples requires a sampling rate of $O(\epsilon^{-1})$. 

In the nonuniform sampling set used in \cite{eng:sampling}, the sampling points $\{x_{j,k}\}$  are clustered in groups with $\Delta x= {1}/{L_{2}}$, for ${L_{2}}$, ${L_1}/{L_{2}}$ positive integers,
\[x_{j,k}=j\Delta x +k \delta x\quad\quad 1\leq j \leq J, \quad 1\leq k \leq K \quad (\delta x < \Delta x).\]
Evaluating the expression \refeq{ex:bl-periodic} at each sampling point results in a system of equations
\[\sum_{n=0}^{N}\sum_{m=0}^M f_{n,m}e^{2\pi i(nx_{j,k}+mx_{j,k}/\epsilon)} = f^\epsilon(x_{j,k}).
\]
This system is invertible when $J>N$ and $K>M$ and the conditions $\delta x <\epsilon/M$ and $\Delta x <1/N$ are satisfied. This shows that it is possible to take advantage of the special structure of $f^\epsilon$ and uniquely reconstruct the function from samples taken from nonuniform sampling set with $O(N)$ density.

\end{example}

Now, removing the assumption that $f(x,y)$ is periodic in $x$, we develop a nonuniform sampling strategy. The proofs involve a modification of arguments from  \cite{Behmard2002}, where the lattice formed by the union of uniform sampling sets \refeq{samplingset} fails the required admissibility conditions.

We will first define a sampling operator that constructs functions using a Shannon-type reconstruction formula. For a function $g\in L^2(\RR^d)$, and a uniform sampling set of the form $X=\{x_0+ j\Delta x\mid j\in\ZZ^d\}$ for some $x_0\in \RR^d$, we formally define the sampling operator $S_{X}$ by
\begin{align}
S_{X}g(x)= \sum_{y\in X} g(y) \varphi_{s}(x-y)\labeleq{SX}, \qquad
\varphi_{s}(z) =  \frac{1}{\lambda(\spec_{s})}\int_{\spec_{s}} e^{2\pi i \ip{z}{\xi} }d\xi,\end{align}
where $\spec_{s}=[-\frac{1}{2\Delta x}, \frac{1}{2\Delta x}]^{d}$.

Due to higher dimensional generalizations of Shannon's sampling theorem \cite{Kluvanek1965, Peterson1962}, the sampling operator is well defined for $g\in \bl{[-N, N]^d}$ when $\Delta x \leq\frac{1}{2N}$, and in this case $S_Xg=g$. We show next that the operator is well defined for a class of functions that are undersampled by the set $X$, that is, the sampling rate is sub-Nyquist.

\begin{lemma}\label{lemma:SXkdef}
Let $f^{\epsilon}$ and $f$ be square-integrable functions satisfying \refeq{twoscale} and \refeq{bl}. Then, the function $S_{X_{k}} f^\epsilon$ corresponding to $X_k$, defined by \refeq{samplingset}, is square-integrable and satisfies
\begin{align}
S_{X_{k}}f^\epsilon(y) = f^\epsilon(y) \quad \text{ for all } y\in X_{k} \labeleq{Skg=g}.
\end{align}
\end{lemma}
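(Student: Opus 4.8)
\emph{Proof idea.} The plan is to reduce the claim to the Whittaker--Shannon machinery applied not to $f^{\epsilon}$ itself, which is sampled below its Nyquist rate by $X_{k}$, but to its individual Fourier modes. First I would invoke the representation \refeq{msrep} established in the proof of Lemma \ref{lemma:feps-spec}: $f^{\epsilon}(x)=\sum_{|m|=0}^{M}c_{m}(x)\,e^{2\pi i\ip{m}{x/\epsilon}}$ with each $c_{m}\in\bl{[-N,N]^{d}}$. The set $X_{k}$ is the uniform lattice $k\delta x+\Delta x\,\ZZ^{d}$, and under the standing assumption $\Delta x\le 1/(2N)$ that makes $S_{X}$ well defined in \refeq{SX}, the cube $[-N,N]^{d}$ and all of its translates by $\tfrac{1}{\Delta x}\ZZ^{d}$ are pairwise disjoint.

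Second, I would show that the sequence of samples $\{f^{\epsilon}(y)\}_{y\in X_{k}}$ belongs to $\ell^{2}$. For each fixed $m$, since $\hat{c}_{m}$ is supported in $[-N,N]^{d}$ and its periodization over $\tfrac{1}{\Delta x}\ZZ^{d}$ does not overlap, Plancherel's theorem gives $\sum_{j\in\ZZ^{d}}|c_{m}(k\delta x+j\Delta x)|^{2}=(\Delta x)^{-d}\|c_{m}\|_{L^{2}(\RR^{d})}^{2}$, which is the Parseval companion of the $d$-dimensional sampling theorem cited before the lemma. Because $|e^{2\pi i\ip{m}{y/\epsilon}}|=1$ for every $y$, the triangle inequality in $\ell^{2}$ over the finitely many multi-indices $m$ then yields $\big\|\{f^{\epsilon}(y)\}_{y\in X_{k}}\big\|_{\ell^{2}}\le(\Delta x)^{-d/2}\sum_{|m|=0}^{M}\|c_{m}\|_{L^{2}(\RR^{d})}<\infty$.

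Third, with square-summable samples in hand, the functions $\{\varphi_{s}(\cdot-y)\}_{y\in X_{k}}$ form an orthogonal family in $\bl{\spec_{s}}$ (their pairwise inner products are values of $\varphi_{s}$ on $\Delta x\,\ZZ^{d}$), so the partial sums of $S_{X_{k}}f^{\epsilon}=\sum_{j}f^{\epsilon}(k\delta x+j\Delta x)\,\varphi_{s}(\cdot-k\delta x-j\Delta x)$ are Cauchy in $L^{2}$ and $S_{X_{k}}f^{\epsilon}$ is a well-defined element of $\bl{\spec_{s}}\subset L^{2}(\RR^{d})$; since $\spec_{s}$ is bounded, the convergence is in addition uniform on $\RR^{d}$, which licenses termwise pointwise evaluation. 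To obtain \refeq{Skg=g} I would read off from \refeq{SX} that $\varphi_{s}(z)=\prod_{l=1}^{d}\frac{\sin(\pi z_{l}/\Delta x)}{\pi z_{l}/\Delta x}$, so that $\varphi_{s}(n\Delta x)=\delta_{n,0}$ for $n\in\ZZ^{d}$; evaluating the series at $y=k\delta x+i\Delta x\in X_{k}$ collapses every term but $j=i$, giving $S_{X_{k}}f^{\epsilon}(y)=f^{\epsilon}(y)$.

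The one genuinely non-routine point is the second step: an $X_{k}$-undersampled multiscale function nonetheless has square-summable coarse-grid samples. The reason is that the sub-Nyquist rate only produces aliasing \emph{between} distinct bands $m\ne m'$, whereas each individual mode $c_{m}$ is sampled at or above its own Nyquist rate by $\Delta x$, and modulation by $e^{2\pi i\ip{m}{\cdot/\epsilon}}$ leaves the $\ell^{2}$ norm of the samples unchanged. Everything else is the classical interpolation argument together with the elementary identity $\varphi_{s}(n\Delta x)=\delta_{n,0}$.
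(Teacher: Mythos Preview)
Your proof is correct and rests on the same two ingredients as the paper's argument: the mode decomposition \refeq{msrep} coming from Lemma~\ref{lemma:feps-spec}, and the interpolation identity $\varphi_{s}(n\Delta x)=\delta_{n,0}$. The organization of the $L^{2}$ part differs slightly. The paper applies $S_{X_{k}}$ term by term to each modulated coefficient $c_{m}e^{2\pi i\ip{m}{\cdot/\epsilon}}$, bounds each resulting series via Cauchy--Schwarz and the square-summability of $\{c_{m}(y)\}_{y\in X_{k}}$, and then sums over the finitely many $m$ to conclude that $S_{X_{k}}f^{\epsilon}$ is square-integrable. You instead first establish $\{f^{\epsilon}(y)\}_{y\in X_{k}}\in\ell^{2}$ directly (Parseval on each $c_{m}$, then triangle inequality in $\ell^{2}$), and then invoke the orthogonality of $\{\varphi_{s}(\cdot-y)\}_{y\in X_{k}}$ in $\bl{\spec_{s}}$ to get $L^{2}$ convergence in one stroke. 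Your route is a bit cleaner and makes the Riesz-basis structure explicit, while the paper's term-by-term formulation has the advantage of also preparing the expression $S_{X_{k}}f^{\epsilon}=\sum_{m}S_{X_{k}}c_{m}^{\epsilon}$ that is reused immediately in the next lemma.
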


\begin{proof}
Due to \refeq{msrep},
$f^\epsilon(x)=\sum_{|m|=0}^{M}c_{m}(x)e^{2\pi i \ip{m}{x/\epsilon}}$, where $c_{m}\in \bl{[-N, N]^d}$.
Applying the sampling operator to each $c_{m}$ results in
\begin{align}
 \(S_{X_{k}}c_{m}e^{2\pi i \ip{m}{\cdot/\epsilon}}\)(x) &= \sum_{y\in X_{k}} c_{m}(y)e^{2\pi i \ip{m}{y}/\epsilon} \varphi_{s}(x-y).\labeleq{SXkgm}
\end{align}
The function defined by $\tilde{c}_{m} = c_{m}(x+k\delta x)$ also lies in the space $\bl{[-N, N]^d}$, and we can bound the magnitude of the terms in the sum \refeq{SXkgm} by
\begin{align*}
 \sum_{y\in X_{k}} |c_{m}(y)e^{2\pi i \ip{m/\epsilon}{y}} \varphi_{s}(x-y)| &= \sum_{y\in X_{0}} |c_{m}(y+k\delta x)e^{2\pi i \ip{m/\epsilon}{y+k\delta x}} \varphi_{s}(x-y-k\delta x)| \\
  &\leq \(\sum_{y\in X_{0}} |\tilde{c}_{m}(y)|^{2} \)^{1/2}<\infty.
\end{align*}
The last statement holds because of the Cauchy Schwartz inequality and the square integrability of $\tilde{c}_{m}$. Therefore the series $\refeq{SXkgm}$ is uniformly absolutely-convergent and 
\begin{align*}
\| S_{X_{k}}c_{m}e^{2\pi i \ip{m}{\cdot/\epsilon}}\|_{L^{2}(\RR^{d})}^{2}\leq \(\sum_{y\in X_{0}} |\tilde{c}_{m}(y)|^{2} \)\|\varphi_{s}\|_{L^{2}(\RR^{d})}^{2}<\infty.
\end{align*}

\noindent Summing over $m$, 
\begin{align*}
{\sum_{|m|=0}^{M}} \(S_{X_{k}}c_{m}e^{2\pi i \ip{m}{\cdot/\epsilon}}\)(x)&= {\sum_{|m|=0}^{M}} \sum_{y\in X_{k}} c_{m}(y)e^{2\pi i \ip{m}{y}/\epsilon} \varphi_{s}(x-y)\\
&=  \sum_{y\in X_{k}}{\sum_{|m|=0}^{M}} c_{m}(y)e^{2\pi i \ip{m}{y}/\epsilon} \varphi_{s}(x-y)\\
&=\sum_{y\in X_{k}} f^\epsilon(y) \varphi_{s}(x-y) = S_{X_{k}}f^\epsilon(x).
\end{align*}
The function $S_{X_{k}}f^\epsilon(x)$ is a finite sum of square-integrable functions, and is therefore also square-integrable.

Then,  the statement \refeq{Skg=g} holds due to the calculations $\varphi_{s}(0) =  \frac{1}{\lambda(\spec_{s})}\int_{\spec_{s}} d\xi = 1$ and
$
\varphi_{s}(y) =  \frac{1}{\lambda(\spec_{s})}\int_{\spec_{s}} e^{2\pi i \ip{n\Delta x}{\xi} }d\xi
		   =  \int_{[-\frac{1}{2}, \frac{1}{2}]^{d}}e^{2\pi i \ip{n}{\xi'} }d\xi' =  0
$
for $y = n\Delta x$, $n\not= 0$.

\end{proof}

The next lemma provides the explicit form of the function reconstructed from sub-Nyquist sampling of $f^{\epsilon}$. The assumption that the macroscale spacing is large with respect to the smallest scale, $\Delta x >\epsilon$, ensures the existence of unique constants $L_{m}\in \ZZ^d$ and $\alpha_{m}\in [0,1/\Delta x)^d$ that satisfy 
 \begin{align*}
 \frac{m}{\epsilon}= \frac{L_{m}}{\Delta x}+\alpha_{m}.
\end{align*}
This representation will allow us to allow for sampling rates $\frac{1}{\Delta x}$ that are not necessarily integer multiples of the highest frequencies.

\begin{lemma} \label{lemma:SX} Let $f^{\epsilon}$ satisfy the assumptions of Lemma \ref{lemma:SXkdef}. The function $S_{X_{k}}f^{\epsilon}$, $k\in \ZZ$ has the explicit form
\begin{align}
S_{X_{k}}f^{\epsilon}(x) =\sum_{|m|=0}^{M}{c}_{m}(x)e^{2\pi i (\ip{\alpha_{m} }{x}+\ip{L_{m}/\Delta x}{k\delta x})} .\labeleq{SXkfeps}
\end{align}
 Moreover, the reconstruction of $S_{X_{k}}f^{\epsilon}$ is stable, 
\begin{align}
\|S_{X_{k}}f^{\epsilon}\|_{L^{2}(\RR^d)}^{2}&=\Delta x^d  \sum_{y\in X_{k}}|f^{\epsilon}(y)|^{2}\leq \frac{1}{(2N)^d}  \sum_{y\in X_{k}}|f^{\epsilon}(y)|^{2}.\labeleq{SMbound}
\end{align}
\end{lemma}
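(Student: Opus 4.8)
The plan is to start from the explicit formula for $S_{X_k}c_m e^{2\pi i \ip{m}{\cdot/\epsilon}}$ established in Lemma \ref{lemma:SXkdef}, namely $\sum_{y\in X_k} c_m(y) e^{2\pi i \ip{m}{y}/\epsilon}\varphi_s(x-y)$, and to simplify each summand using the decomposition $\frac{m}{\epsilon} = \frac{L_m}{\Delta x} + \alpha_m$. Writing $y = j\Delta x + k\delta x$ with $j\in\ZZ$, the phase $e^{2\pi i \ip{m}{y}/\epsilon}$ factors as $e^{2\pi i \ip{L_m/\Delta x + \alpha_m}{j\Delta x + k\delta x}}$; the cross term $e^{2\pi i\ip{L_m}{j}}=1$ since $L_m,j\in\ZZ^d$, which is precisely the point of introducing the integer part $L_m$. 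What remains is $e^{2\pi i \ip{\alpha_m}{j\Delta x}} e^{2\pi i \ip{\alpha_m}{k\delta x}} e^{2\pi i \ip{L_m/\Delta x}{k\delta x}}$. The factor $e^{2\pi i \ip{\alpha_m}{k\delta x}} e^{2\pi i \ip{L_m/\Delta x}{k\delta x}} = e^{2\pi i\ip{m}{k\delta x}/\epsilon}$ is independent of $j$, so I would pull it out of the sum over $j$. Then the first step concludes by recognizing $\sum_j c_m(j\Delta x + k\delta x) e^{2\pi i\ip{\alpha_m}{j\Delta x}}\varphi_s(x - j\Delta x - k\delta x)$ as $S_{X_k}$ applied to the modulated function $c_m(\cdot)e^{2\pi i\ip{\alpha_m}{\cdot}}$ — or, more directly, as the Shannon reconstruction of $c_m(x)e^{2\pi i \ip{\alpha_m}{x}}$. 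Since $c_m\in\bl{[-N,N]^d}$ and $|\alpha_m|<1/\Delta x$ componentwise, the modulated function lies in $\bl{[-N,N]^d + \alpha_m}\subset\bl{[-\frac{1}{2\Delta x},\frac{1}{2\Delta x}]^d} = \bl{\spec_s}$, using $\Delta x\le 1/2N$; hence the higher-dimensional Shannon theorem gives that $S_{X_k}$ reproduces it exactly, and summing over $m$ yields \refeq{SXkfeps} after collecting the $k$-dependent phase, with care that $e^{2\pi i\ip{m}{k\delta x}/\epsilon}$ must be rewritten to match the stated form $e^{2\pi i\ip{\alpha_m}{x}+\ip{L_m/\Delta x}{k\delta x}}$ — actually the stated formula keeps $e^{2\pi i\ip{\alpha_m}{x}}$ inside and $e^{2\pi i\ip{L_m/\Delta x}{k\delta x}}$ as the phase, so I would just observe that $e^{2\pi i\ip{\alpha_m}{k\delta x}}$ has been absorbed; I need to double-check this bookkeeping against the precise claim, and if there is a discrepancy it is only in how the $\alpha_m\cdot k\delta x$ term is displayed.

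For the norm identity \refeq{SMbound}, the plan is to invoke the Parseval-type orthogonality built into the Shannon sampling theorem on the lattice $\Delta x\,\ZZ^d$: for any $g\in\bl{\spec_s}$ one has $\|g\|_{L^2(\RR^d)}^2 = \Delta x^d\sum_{y\in X_k}|g(y)|^2$, because the shifted sinc kernels $\{\varphi_s(\cdot - y)\}_{y\in X_k}$ form an orthogonal family with $\|\varphi_s(\cdot - y)\|_{L^2}^2 = \lambda(\spec_s)^{-1} = \Delta x^d$ (from the definition $\varphi_s(z) = \lambda(\spec_s)^{-1}\int_{\spec_s}e^{2\pi i\ip{z}{\xi}}d\xi$ and Plancherel). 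Applying this to $g = S_{X_k}f^\epsilon$, which lies in $\bl{\spec_s}$ by \refeq{SXkfeps} and the preceding bandwidth observation, and using $S_{X_k}f^\epsilon(y) = f^\epsilon(y)$ for $y\in X_k$ from Lemma \ref{lemma:SXkdef}, gives $\|S_{X_k}f^\epsilon\|_{L^2}^2 = \Delta x^d\sum_{y\in X_k}|f^\epsilon(y)|^2$. The final inequality is immediate from $\Delta x\le 1/2N$, hence $\Delta x^d\le (2N)^{-d}$.

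The main obstacle I anticipate is entirely bookkeeping rather than conceptual: correctly tracking the splitting of the phase $e^{2\pi i\ip{m}{y}/\epsilon}$ into the $j$-dependent part (which must combine with $\varphi_s$ into a clean Shannon series for a band-shifted function), the $k$-dependent part (which becomes the advertised phase factor), and verifying that the $e^{2\pi i\ip{L_m}{j}}=1$ cancellation is exactly what the decomposition $m/\epsilon = L_m/\Delta x + \alpha_m$ is designed to produce. A secondary subtlety is justifying the orthogonality of the kernels $\varphi_s(\cdot-y)$ and the consequent norm formula in $d$ dimensions; this is standard (it is the Plancherel identity for the Fourier series of a function supported on $\spec_s$), but I would state it cleanly, perhaps as a one-line computation $\int_{\RR^d}\varphi_s(x-y)\overline{\varphi_s(x-y')}\,dx = \lambda(\spec_s)^{-2}\int_{\spec_s}e^{2\pi i\ip{y'-y}{\xi}}d\xi = \lambda(\spec_s)^{-1}\delta_{y,y'}$ for $y,y'\in\Delta x\,\ZZ^d$, since $(y'-y)/\Delta x\in\ZZ^d$. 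No genuinely hard estimate is required; the square-integrability needed to legitimize these manipulations has already been secured in Lemma \ref{lemma:SXkdef}.
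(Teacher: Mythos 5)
Your treatment of the norm identity \refeq{SMbound} is correct and is exactly the paper's argument: orthogonality of the translates $\varphi_s(\cdot-y)$, $y\in X_k$, with $\|\varphi_s\|_{L^2}^2=\lambda(\spec_s)^{-1}=\Delta x^d$, then $\Delta x\le 1/(2N)$. Your derivation of \refeq{SXkfeps} is also the same strategy as the paper's in substance: split the phase via $m/\epsilon=L_m/\Delta x+\alpha_m$, use $e^{2\pi i\ip{L_m}{j}}=1$, and reduce everything to the claim that $S_{X_k}$ reproduces the modulated coefficient $c_m(\cdot)e^{2\pi i\ip{\alpha_m}{\cdot}}$ exactly. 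The paper establishes that same claim by a Poisson-summation computation (periodizing to $\sum_{z\in\spec_s^{-1}}\hat d_m(\xi-\alpha_m+z)$ on $\spec_s$ and keeping only the $z=0$ term), whereas you cite the multidimensional Shannon theorem; these are equivalent moves, so the difference is packaging, not route.

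The gap is in the one line you use to justify that key claim: the inclusion $[-N,N]^d+\alpha_m\subset[-\tfrac1{2\Delta x},\tfrac1{2\Delta x}]^d$ does \emph{not} follow from $\Delta x\le 1/(2N)$, because the offsets are normalized by $\alpha_m\in[0,1/\Delta x)^d$, so a component of $N+\alpha_m$ can reach nearly $\tfrac{3}{2\Delta x}$. The inclusion holds only if $\alpha_m\le\tfrac1{2\Delta x}-N$ componentwise, and nothing in the hypotheses forces this: e.g.\ $d=1$, $N=1$, $\Delta x=\tfrac12$, $\epsilon=0.3$ gives $1/\epsilon=10/3$, $L_1=1$, $\alpha_1=4/3$, and $[-1,1]+\tfrac43\not\subset[-1,1]$. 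When the shifted band protrudes out of $\spec_s$, $S_{X_k}$ does not reproduce $c_me^{2\pi i\ip{\alpha_m}{\cdot}}$; it returns an aliased function. If the whole band sits inside a single translate $\spec_s+z$, $z\in\spec_s^{-1}$, one recovers $c_m$ times the character with the folded offset $\alpha_m-z$ (so \refeq{SXkfeps} survives only after replacing $\alpha_m$, $L_m$ by their folded counterparts); if the band straddles the boundary of $\spec_s$, the output is not a modulation of $c_m$ at all and the displayed formula fails. Be aware that this is precisely the point where the paper's own proof does its work: discarding the $z\neq0$ terms of the periodization is the same assertion, needing the same condition. So your shortcut hides, rather than avoids, the real content; to close it you must either add a hypothesis that each $m/\epsilon$ lies within $\tfrac1{2\Delta x}-N$ (componentwise) of the reciprocal lattice $\tfrac1{\Delta x}\ZZ^d$, or restate \refeq{SXkfeps} with the folded offset, which is what the subsequent Vandermonde argument in Theorem \ref{thm:main-samp} actually needs (it only requires the resulting nodes $w_m$ to remain distinct).
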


\begin{proof}
In the proof of Lemma \ref{lemma:SXkdef}, it is shown that the function $S_{X_{k}}f^{\epsilon}$ is well defined, square-integrable, and can be expressed in terms of its sampled Fourier coefficients \begin{align}
S_{X_{k}}f^{\epsilon}(x) =\sum_{|m|=0}^{M}S_{X_{k}}{c}_{m}^{\epsilon}(x),\labeleq{SXkcm}
\end{align}
where ${c}_{m}^{\epsilon}(x)= c_{m}(x)e^{2\pi i \ip{m/\epsilon}{x}}$ is a function in the space $\bl{[-N+\frac{m}{\epsilon}, N+\frac{m}{\epsilon}]^d}$. Define the shifted function $d_{m}(x) = c_{m}(x+k\delta x)e^{2\pi i \ip{m/\epsilon}{k\delta x}}$. Then, for each $m$, 
\begin{align}
S_{X_{k}}{c}^{\epsilon}_{m}(x)&=\sum_{y\in X_{k}}c_{m}(y)e^{2\pi i \ip{m/\epsilon}{ y}}\varphi_{s}(x-y)\nonumber\\
 &=\sum_{y\in X_{0}}d_{m}(y)e^{2\pi i \ip{\alpha_{m}}{ y}}\frac{1}{\lambda(\spec_s)}\int_{\spec_s}e^{2\pi i\ip{x- y-k\delta x}{\xi} }d\xi\nonumber\\
 &=\int_{\spec_s}\left[\frac{1}{\lambda(\spec_s)}\sum_{y\in X_{0}}d_{m}(y)e^{-2\pi i \ip{y}{ \xi-\alpha_{m}}}  \right] e^{2\pi i \ip{x-k\delta x}{\xi}}d\xi\nonumber\\
 &=\int_{\spec_s}\left[ \sum_{z\in\spec_{s}^{-1}}\hat{d}_{m}\(\xi- \alpha_{m} + z\)  \right] e^{2\pi i \ip{x-k\delta x}{\xi}}d\xi \labeleq{pois1}\\
&=\int_{\spec_s} \hat{d}_{m}(\xi- \alpha_{m} ) e^{2\pi i \ip{x-k\delta x}{\xi}}d\xi\nonumber\\
&= {d}_{m}(x-k\delta x) e^{2\pi i \ip{\alpha_{m} }{x-k\delta x}}\nonumber\\
&= {c}_{m}(x)e^{2\pi i (\ip{\alpha_{m} }{x}+\ip{L_{m}/\Delta x}{k\delta x})} \labeleq{cm}.
\end{align}
Here, the set $\spec_{s}^{-1}=\{ l/\Delta x\mid l \in \ZZ^{d}\}$ is called the \emph{reciprocal lattice}.

Since the functions $c_{m}$ and $\varphi_{s}$ are both square integrable, the sums converge uniformly and the exchange between sum and integral is justified. The Poisson summation formula in $\RR^{d}$ is used for \refeq{pois1} \cite{Pinsky2002}. Then, substituting \refeq{cm} in \refeq{SXkcm} proves the reconstruction formula. For stability, 
\begin{align*}
\|S_{X_{k}}f^{\epsilon}\|_{L^{2}(\RR^d)}^{2} = \|\sum\limits_{y\in X_{k}} f^{\epsilon}(y) \varphi_{s}(\cdot-y)\|_{L^{2}(\RR^d)}^{2}=  \Delta x^{d} \sum\limits_{y\in X_{k}}|f^{\epsilon}(y)|^{2}.
\end{align*}
\end{proof}


The expression \refeq{SXkfeps} plays an important role in the proof of the main result. 

\subsection{Proof of Theorem \ref{thm:main-samp}}\label{sec:proof}
The stable reconstruction formula for multiscale functions $f^{\epsilon}$ is derived using an approach similar to \cite{eng:sampling}. In the proof, we will need to estimate of the norm of  the inverse of Vandermonde matrices. Gautschi proved in \cite{Gautschi1990} that for arbitrary $w_{l}\in \mathbb{C}$, with $w_{l}\neq w_{l'}$ if $l\neq l'$, there holds
\begin{align}
\max_{l}\prod_{l'\neq l}\frac{\max(1, |w_{l'}|)}{|w_{l} - w_{l'}|} \leq\|V^{-1}\|_{\infty}\leq \max_{ l}\prod_{l'\neq l}\frac{1+|w_{l'}|}{|w_{l} - w_{l'}|} ,\labeleq{gautschi}
\end{align}
where $V$ is a Vandermonde matrix with elements $w_{0}, \hdots w_{P}\in \mathbb{C}$. The upper bound is obtained if $w_{l} = |w_{l}|e^{i\theta}, l=0, \hdots, P $ for some fixed $\theta \in \RR$.

Now we will prove the main result, Theorem \ref{thm:main-samp}.
\begin{proof}
The set of equations from \refeq{SXkfeps} form the linear system
\begin{align*}
S_{X_{k}}f^{\epsilon}(x) =\sum_{m=-M}^M{c}^{\alpha}_{m}(x)e^{2\pi i \frac{L_{m}}{\Delta x}{k\delta x}}, \qquad k=0, 1, \hdots P.
\end{align*}
where $c^{\alpha}_{m}(x) = c_{m}(x)e^{2\pi i {\alpha_{m}}{x}}$. When $P=2M$, the corresponding Vandermonde matrix $V$ contains the elements $V_{mk} = w_{m-M}^{k}$ for $w_{m} = e^{2\pi i \frac{L_{m}}{\Delta x}\delta x}$. If $w_{-M}, \hdots w_{ M}$ are distinct elements, the system is invertible. 

First, $w_0=1$. Since $0<\frac{L_M \delta x}{\Delta x}\leq\frac{M\delta x}{\epsilon}\leq\frac{M}{2M+1}<\frac{1}{2}$, the elements $w_{1}, \hdots, w_{M}$ are distinct nodes distributed on the upper half plane of the unit circle, and the elements $w_{-M}, \hdots, w_{-1}$ are distinct nodes distributed on the lower half plane of the unit circle. This ensures the existence of $V^{-1}$. As a result, the reconstruction formula for $f^{\epsilon}$ is well defined:
 \begin{align}
f^{\epsilon}(x) &= \sum_{m=-M}^{M}c_{m}^{\alpha}(x)e^{2\pi i \frac{L_{m}}{\Delta x}x }\\
 &= \sum_{j=0}^{2M}\(\sum_{k=0}^{P}(V^{-1})_{jk}S_{X_{k}}f^{\epsilon}(x)\)e^{2\pi i \frac{L_{j}}{\Delta x}x }.
\end{align}

Stability is shown using some properties of Vandermonde matrices and \refeq{gautschi}. The upper bound on $|w_{l} - w_{l'}|$ can be computed
\begin{align}
|w_{l} - w_{l'}|\leq|e^{2\pi i M\delta x/\epsilon} - 1|<|e^{2\pi i \frac{M}{2M+1}} - 1|<{2}.
\end{align}

\noindent Now we compute the smallest distance between adjacent nodes. In the first case, for $-M\leq l<M-1$,
\begin{align*}
|w_{l+1} - w_{l}| =|e^{2\pi i (\frac{1}{\epsilon} - (\alpha_{l+1} - \alpha_{l}))\delta x} -1|>|e^{2\pi i (\frac{1}{\epsilon} -  \frac{1}{\Delta x})\delta x} -1|.
\end{align*}

\noindent The distance between adjacent nodes $w_{M}$ and $w_{-M}$ is
\begin{align*}
|w_{M} - w_{-M}|
&=|e^{2\pi i 2M\delta x/\epsilon-(\alpha_M-\alpha_{-M})\delta x}-1|\\
&=|e^{2\pi i(1- 2M\delta x/\epsilon+(\alpha_M-\alpha_{-M})\delta x)}-1|\\
&> |e^{2\pi i (\frac{1}{\epsilon} - \frac{1}{\Delta x})\delta x } - 1|.
\end{align*}
\noindent Since ${\Delta x}> \epsilon$, the last term in both cases is nonzero. Then, we have the estimate
\begin{align*}
\frac{1}{2^{2M}}<\|V^{-1}\|_{\infty}\leq \(\frac{2}{|e^{2\pi i (\frac{1}{\epsilon} -  \frac{1}{\Delta x})\delta x } - 1|}\)^{2M} =  \frac{1}{\( \sin{(\pi (\frac{1}{\epsilon} -  \frac{1}{\Delta x})\delta x) }\)^{2M}}. 
\end{align*}
Since $(\frac{1}{\epsilon}- \frac{1}{\Delta x})\delta x <\frac{\delta x}{\epsilon}< \frac{1}{2M+1}<\frac{1}{2}$, the denominator is bounded away from zero.
A final stability estimate for the reconstruction of $f^{\epsilon}$ from sampling sets $X_{k}, k=0, \hdots P$ is
 \begin{align*}
\|f^{\epsilon}\|_{L^{2}}^{2} &\leq \|V^{-1}\|_{\infty} \|\sum_{k}S_{X_{k}}f^{\epsilon}(x)\|_{L^{2}}^{2} < C(\delta x /\epsilon) \sum_{y\in \cup_{k}X_{k}} |f^{\epsilon}(y) |^{2},
\end{align*}
where the stability constant is
\begin{align*}
C(\delta x /\epsilon) \leq \frac{1}{2N\( \sin{\(\pi \({\epsilon^{-1}} - {\Delta x^{-1}}\)\delta x\) }\)^{2M}}.
\end{align*}
\qquad\end{proof}

Therefore, reconstruction is more stable when the nodes $w_{m}$ are maximally spaced apart on the unit circle, which occurs when $\delta x/\epsilon$ is close to $\frac{1}{2M+1}$. 

\begin{remark}\rm
The results can easily be generalized to functions $f(x,y)$ that are bandlimited to a subset of $[-N, N]^d\times [-M, M]^d$. As an example, we consider a function with only two frequency bands.

Let $f^\epsilon\in L^2(\RR)$ be a multiscale function of the form
\begin{align*}
f^\epsilon(x) = c_0(x)+ c_1(x)e^{2\pi i x/\epsilon}; \qquad c_0, c_1\in \bl{[-N, N]},\end{align*}
where $\frac{\Delta x}{\epsilon}= L_{1}\in \ZZ$.
Then, applying the sampling operator for the sets $X_0:=\{j\Delta x\mid j \in \ZZ\}$ and $X_1:=\{j\Delta x+\delta x \mid j \in \ZZ\}$ results in the reconstructed functions
\begin{align*}
S_{X_0}f^\epsilon(x) &= c_0(x)+ c_1(x),\\
S_{X_1}f^\epsilon(x) &= c_0(x)+ c_1(x)e^{2\pi i \delta x/\epsilon}.
\end{align*}
We have a Vandermonde system with $V = \begin{pmatrix}1 & 1 \\ 1 & w_1\end{pmatrix}$, where $w_1 = e^{2\pi i \delta x/\epsilon}$. Taking the inverse results in
\begin{align*}
V^{-1}&=\frac{1}{w_1-1}\begin{pmatrix}w_1 & -1 \\ -1 & 1\end{pmatrix},\\
\|V^{-1}\|_{\infty} &= \frac{2}{|w_1-1|} = \frac{1}{\sin(\pi \delta x/\epsilon).}
\end{align*}
Therefore, 
\begin{align*}
\|f^{\epsilon}\|_{L^{2}}^{2} &\leq \| V^{-1}\|_\infty\|S_{X_0}f^\epsilon + S_{X_1}f^\epsilon \|_{L^{2}(\RR)}^{2}\\
&\leq\frac{1}{2N\sin(\pi \delta x/\epsilon)} \sum_{y\in{X_0\cup X_1}}|f^\epsilon(y)|^2.\\
\end{align*}
Then if $\delta x = \frac{1}{2\epsilon}$, the stability constant is equal to $C = \frac{1}{2N}$.
\end{remark}

\begin{remark} \rm
The uniform sampling theorem in higher dimensions is described in \cite{Peterson1962}, along with an optimal sampling rate. To the best knowledge of the authors, the theory for sub-Nyquist sampling of multiband functions in higher dimensions is incomplete. The lemmas in previous sections allow us to adapt Theorem \ref{thm:main-samp} for $d\geq 1$. 
In the case of $d=1$, we showed sufficient conditions on the sampling sets that resulted in an invertible Vandermonde matrix $V$. In higher dimensions, the fundamental theorem of algebra does not hold and the invertibility of the system is not guaranteed. Therefore, new theory is needed to determine whether the system produced from nonuniform periodic sampling in higher dimensions allows for the stable reconstruction of multiband functions.
\end{remark}

\section{Conclusions}\label{sec:conclusion}

In this paper we show the $L_{2}$-stability of periodic nonuniform sampling for a class of functions studied in homogenization theory and multiscale analysis. These functions are of the type $f^{\epsilon}(x) = f(x, x/\epsilon)$, where $f(x,y)$ is periodic in the second variable and the parameter $\epsilon$, $0<\epsilon\ll1$ represents the ratio of scales in the problem. We view these functions in the setting of information theory by making the further assumption that $f$ belongs to the class of bandlimited functions. 

Then, applying tools from sampling theory, it is shown that $f^{\epsilon}$ is uniquely determined by sampling sets of the form 
\begin{align*}
f^{\epsilon}(z),\quad z \in \{j\Delta x + k \delta x \mid j \in \ZZ, 0\leq k \leq P-1\},
\end{align*}
where $P$ is the number of frequency bands in the spectrum of $f^{\epsilon}$. This matches well with grids used in multiscale simulations of coupled models on different  scales. Here, a solver coupling the macro and micro scales would find a compromise between the effective solution on a macroscale grid of size $\Delta x$ and the full direct numerical simulation on a fine scale grid with spacing $\delta x <\Delta x$. The new stability estimate provides a guideline for choosing the macroscale spacing $\Delta x$ and micro scale spacing $\delta x$ so that the average sampling rate attains the minimal sampling rate of Landau \cite{Landau}.


\bibliographystyle{siam}
\bibliography{Thesis}

\begin{thebibliography}{10}

\bibitem{Abdulle2012}
{\sc Assyr Abdulle, Weinan E, Bj\"{o}rn Engquist, and Eric Vanden-Eijnden},
  {\em {The heterogeneous multiscale method}}, Acta Numerica, 21 (2012),
  pp.~1--87.

\bibitem{Behmard2002}
{\sc Hamid Behmard and Adel Faridani}, {\em {Sampling of bandlimited functions
  on unions of shifted lattices}}, Journal of Fourier Analysis and
  Applications,  (2002), pp.~1--22.

\bibitem{Beurling1989}
{\sc Arne Beurling and Lennart Carleston}, {\em {The Collected works of Arne
  Beurling}}, Birkhauser, Boston, MA, 1989.

\bibitem{Clark1985}
{\sc James~J. Clark, Mathew~R. Palmer, and Peter~D. Lawrence}, {\em {A
  transformation method for the reconstruction of functions from nonuniformly
  spaced samples}}, IEEE Transactions on Acoustics, Speech, and Signal
  Processing, 33 (1985), pp.~1151--1165.

\bibitem{E2011}
{\sc Weinan E}, {\em {Principles of Multiscale Modeling}}, Cambridge University
  Press, 2011.

\bibitem{E2003}
{\sc Weinan E and Bj\"{o}rn Engquist}, {\em {The heterogenous multiscale
  methods}}, Communications in Mathematical Sciences, 1 (2003), pp.~87--132.

\bibitem{eng:sampling}
{\sc Bj\"{o}rn Engquist}, {\em {Multi-scale Modeling}}, in Perspectives in
  Analysis, Stanislav {Benedicks, Michael and Jones, Peter W. and Smirnov} and
  Winckler Bj\"{o}rn, eds., vol.~27 of Mathematical Physics Studies, Springer
  Berlin Heidelberg, 2005, pp.~51--63.

\bibitem{Engquist2005}
{\sc Bj\"{o}rn Engquist and Yen-hsi Tsai}, {\em {Heterogeneous Multiscale
  Methods for Stiff Ordinary Differential Equations}}, Mathematics of
  Computation, 74 (2005), pp.~1707--1742.

\bibitem{Feichtinger1992}
{\sc Hans~G. Feichtinger and Karlheinz Gr\"{o}chenig}, {\em {Iterative
  reconstruction of multivariate band-limited functions from irregular sampling
  values}}, SIAM Journal on Mathematical Analysis, 23 (1992), pp.~244--261.

\bibitem{Gautschi1990}
{\sc Walter Gautschi}, {\em {How (Un) stable are Vandermonde systems?}},
  Asymptotic and computational analysis,  (1990), pp.~193--210.

\bibitem{Grochenig1992}
{\sc K~Gr\"{o}chenig}, {\em {Reconstruction algorithms in irregular sampling}},
  Mathematics of Computation,  (1992).

\bibitem{Jerri1977}
{\sc A.J. Jerri}, {\em {The Shannon sampling theorem-Its various extensions and
  applications: A tutorial review}}, Proceedings of the IEEE, 65 (1977),
  pp.~1565--1596.

\bibitem{Kempf2010}
{\sc Achim Kempf}, {\em {Filtering, Sampling, and Reconstruction With
  Time-Varying Bandwidths}}, IEEE Signal Processing Letters, 17 (2010),
  pp.~241--244.

\bibitem{Kluvanek1965}
{\sc I~Kluv\'{a}nek}, {\em {Sampling theorem in abstract harmonic analysis}},
  Matematicko-fyzik\'{a}lny \v{c}asopis,  (1965).

\bibitem{Kohlenberg1953}
{\sc Arthur Kohlenberg}, {\em {Exact Interpolation of Band-Limited Functions}},
  Journal of Applied Physics, 24 (1953), p.~1432.

\bibitem{Landau1967}
{\sc H.~J. Landau}, {\em {Necessary density conditions for sampling and
  interpolation of certain entire functions}}, Acta Mathematica, 117 (1967),
  pp.~37--52.

\bibitem{Landau}
\leavevmode\vrule height 2pt depth -1.6pt width 23pt, {\em {Sampling, data
  transmission, and the Nyquist rate}}, Proceedings of the IEEE, 55 (1967),
  pp.~1701--1706.

\bibitem{Margolis2008}
{\sc Evgeny Margolis and Yonina~C Eldar}, {\em {Nonuniform Sampling of Periodic
  Bandlimited Signals}}, IEEE Transactions on Signal Processing, 56 (2008),
  pp.~2728--2745.

\bibitem{Marks1985}
{\sc R.~Marks}, {\em {Ill-posed sampling theorems}}, IEEE Transactions on
  Circuits and Systems, 32 (1985), pp.~481--484.

\bibitem{Marvasti2001}
{\sc Farokh~A Marvasti}, {\em {Nonuniform sampling}}, Kluwer Academic/Plenum
  Publishers, 2001.

\bibitem{Papoulis1977a}
{\sc A.~Papoulis}, {\em {Generalized sampling expansion}}, IEEE Transactions on
  Circuits and Systems, 24 (1977), pp.~652--654.

\bibitem{Peterson1962}
{\sc Daniel~P. Petersen and David Middleton}, {\em {Sampling and reconstruction
  of wave-number-limited functions in N-dimensional Euclidean spaces}},
  Information and Control, 5 (1962), pp.~279--323.

\bibitem{Pinsky2002}
{\sc Mark~A. Pinsky}, {\em {Introduction to Fourier Analysis and Wavelets}},
  American Mathematical Society, 2002.

\bibitem{Prendergast}
{\sc Ryan~S Prendergast, Bernard~C Levy, and Paul~J Hurst}, {\em
  {Reconstruction of Bandlimited Periodic Nonuniformly Sampled Signals through
  Multirate Filter Banks}}, IEEE Transactions on Circuits and Systems I:
  Regular Papers, 51 (2004), pp.~1--11.

\bibitem{Shannon}
{\sc Claude~E. Shannon}, {\em {Communication in the presence of noise}},
  Proceedings of the IRE, 37 (1949), pp.~10--21.

\bibitem{Shannon1948}
{\sc C~E Shannon and W~Weaver}, {\em {The Mathematical Theory of
  Communication}}, MD computing computers in medical practice, 14 (1948),
  pp.~306--17.

\bibitem{Vaidyanathan1998}
{\sc P.~P. Vaidyanathan}, {\em {Periodically nonuniform sampling of bandpass
  signals}}, IEEE Transactions on Circuits and Systems II: Analog and Digital
  Signal Processing, 45 (1998), pp.~340--351.

\bibitem{Vaidyanathan2001}
\leavevmode\vrule height 2pt depth -1.6pt width 23pt, {\em {Generalizations of
  the sampling theorem: Seven decades after Nyquist}}, Circuits and Systems I:
  Fundamental Theory and applications, 48 (2001), pp.~1094--1109.

\bibitem{Venkataramani2000}
{\sc Raman Venkataramani and Yoram Bresler}, {\em {Perfect reconstruction
  formulas and bounds on aliasing error in sub-Nyquist nonuniform sampling of
  multiband signals}}, Information Theory, IEEE Transactions on, 46 (2000),
  pp.~2173--2183.

\bibitem{Venkataramani2003}
{\sc R.~Venkataramani and Y.~Bresler}, {\em {Sampling theorems for uniform and
  periodic nonuniform mimo sampling of multiband signals}}, IEEE Transactions
  on Signal Processing, 51 (2003), pp.~3152--3163.

\bibitem{Yen1956}
{\sc J~Yen}, {\em {On nonuniform sampling of bandwidth-limited signals}},
  Circuit Theory, IRE Transactions on, 3 (1956), pp.~251--257.

\end{thebibliography}

\end{document}